\documentclass[12pt]{amsart}
\usepackage[a4paper,margin=1.9cm,top=2.5cm,bottom=2.5cm,centering,vcentering]{geometry}

\usepackage{concmath}

\theoremstyle{plain}
\newtheorem{theorem}{Theorem}

\theoremstyle{definition}
\newtheorem{definition}{Definition}
\usepackage{graphicx}
\usepackage{xcolor}
\usepackage{enumerate}
\usepackage[shortlabels]{enumitem}

\usepackage{parskip}
\usepackage{tikz}
\usetikzlibrary{shapes.geometric, arrows, shadows, calc,positioning}
\usetikzlibrary{graphs, graphs.standard, quotes, arrows.meta}

\usepackage[bookmarksnumbered, plainpages,colorlinks=true,allcolors=blue]{hyperref}

\usepackage{tkz-graph}

\theoremstyle{remark}

\newtheorem{example}{Example}

\newcommand\set [1]{\{#1\}}
\newcommand{\tpmod}[1]{{\@displayfalse\pmod{#1}}}
\usepackage{amsmath}
\numberwithin{equation}{section}
\usepackage{tikz}
\newcommand{\Cross}{\mathbin{\tikz [x=1.4ex,y=1.4ex,line width=.2ex] \draw (0,0) -- (1,1) (0,1) -- (1,0);}}
\allowdisplaybreaks[4]

\begin{document}
\title[Signed Roman Domination Number]{The Signed Roman Domination Number of Ladder graphs, circular Ladder graphs and their complements}

\author{Dilbak Haje}
\author{Delbrin Ahmed}
\address{University of Duhok, University campus, Zakho street, Duhok, Kurdistan region, Iraq} 
\email{dilpakhaje@uod.ac, delbrin.ahmed@uod.ac}

\author{Hassan Izanloo}
\address{School of Mathematics, University of Leeds, Leeds, LS2 9JT, UK}
\email{h.izanloo@leeds.ac.uk}

\author{Manjil Saikia}
\address{Mathematical and Physical Sciences division, School of Arts and Sciences, Ahmedabad University, Navrangpura, Ahmedabad - 380009, Gujarat, India
}
\email{manjil@saikia.in}

\begin{abstract}
Let $G=(V,E)$ be a finite connected simple graph with vertex set $V$ and edge set $E$. A signed Roman dominating function (SRDF) on a graph $G$ is a function  $f: V \rightarrow \{-1, 1, 2\}$ that satisfies two conditions: (i) $\sum_{y\in N[x]} f(y)\geq1$ for each $x\in V$, where the set $N[x]$ is the closed neighborhood of $x$ consisting of $x$ and vertices of $V$ that are adjacent to $x$, and (ii) each vertex $x\in V$  where $f(x) = -1$ is adjacent to at least one vertex  $y\in V$ where $f(y)=2$. The weight of a SRDF is the sum of its function values over all vertices. The signed Roman domination number of  $G$, denoted by $\gamma_{SR}(G)$, is the minimum weight of a SRDF on $G$. In this paper, we investigate the signed Roman domination number of the Ladder graph $LG_n$, the circular Ladder graph $CL_n$ and their complements.
\end{abstract}

\subjclass{05C69, 05C78, 90C27}

\keywords{Signed Roman Domination number, Domination function.}

\maketitle

\section{Introduction}

Let $G=(V(G), E(G))$ be a connected graph of order $n=|V(G)|$ and size $m=|E(G)|$. In this paper, only (non trivial) simple graphs, i.e., finite, undirected graphs without loops or multiple edges are considered. When $u$ is a vertex of $G$, then the open neighbourhood of $u$ in $G$ is the set $N_G(u)=\set{v:\set{u,v}\in E(G)}$ and the closed neighbourhood of $u$ in $G$ is the set $N_G[u]= N_G(u) \cup \set{u}$. The degree of vertex $u$ is the number of edges adjacent to $u$ and is denoted by $deg_G(u)$. A graph is said to be regular if all of its vertices have the same degree. The minimum degree and the maximum degree of $G$ are denoted by $\delta(G)$ and $\Delta(G)$, respectively. A graph is called $k$-regular if each vertex of the graph has degree $k$. We write $K_n$ for the complete graph of order $n$, $P_n$ for the path graph of order $n$ and $C_n$ for a cycle of length $n$. The complement of a graph $G=(V, E)$ is a graph $G^C=(V,E^C)$ with the same vertex set $V$ with two vertices $uv\in E^C$ if and only if $\{u,v\}\notin E$, for all pairs $u\neq v\in V$. 

A set $D \subseteq V(G)$ is called a \textit{dominating set} of $G$ if each vertex outside $D$ has at least one neighbour in $D$. The minimum cardinality of a dominating set of $G$ is the domination number of $G$ and is denoted by $\gamma(G)$. For example, the domination numbers of the $n$-vertex complete graph, path, and cycle are given by $\gamma(K_n)=1 , \gamma(P_n)=\left\lceil\dfrac{n}{3}\right\rceil \text{ and } \gamma(C_n)=\left\lceil\dfrac{n}{3}\right\rceil$, respectively (see \cite{haynes2017domination}). Domination is a rapidly developing area of research in graph theory, and has various applications to several other practical areas. The concept of domination has existed and was studied for a long time and early discussions on the topic can be found in the works of Ore \cite{ore1962theory} and Berge \cite{berge1973graphs}. Garey and Johnson \cite{gray1979computers} have shown that determining the domination number of an arbitrary graph is an NP-complete problem. 

The domination number can be defined equivalently by means of a function, which can be considered as a characteristic function of a dominating set (see \cite{haynes2017domination}). A function $f: V(G) \rightarrow \set{0,1}$ is called a \textit{domination function} on $G$ if for each vertex $u\in V(G), \sum_{v\in N_G[u]}f(v)\geq 1$. The value $w(f)=\sum_{u\in V(G)}f(u)$ is called the \textit{weight} of $f$. Now, the domination number of $G$ can be defined as
$$ \gamma(G)= \min\set{w(f): f~\text{is a domination function of G}}.$$

Analogously, we define a \textit{signed domination function} of $G$ to be a labelling of the vertices of $G$ with $+1$ and $-1$ such that the closed neighbourhood of each vertex contains more $+1$'s than $-1$'s. The signed domination number of $G$ is the minimum value of the sum of vertex labels that is taken over all \textit{signed domination function} of $G$. In this paper, we concentrate on a related function, called the signed Roman domination function, which we define now.
\begin{definition}
  Let $G=(V, E)$ be a graph. A \textit{signed Roman domination function} (SRDF) on the graph $G$ is a function $f: V(G)\rightarrow \set{-1, 1, 2}$ which satisfies the following conditions:
  \begin{enumerate}[(i)]
   \item For each $u\in V(G)$, $\sum_{v\in N_G[u]}f(v)\geq 1$, and
   \item Each vertex $u$ for which $f(u) = -1$ is adjacent to at least one vertex $v$ for which $f(v) = 2$.
\end{enumerate}
The value $f(V) = \sum_{u\in V(G)}f(u)$ is called the \textit{weight} of the function $f$ and is denoted by $w(f)$. The \textit{signed Roman domination number} of $G$,  $\gamma_{SR}(G)$, is the minimum weight of a SRDF on $G$.
\end{definition}

The concept of a SRDF was introduced by Ahangar et al. \cite{abdollahzadeh2014signed}. They described the usefulness of these concepts in various applicable areas (see \cite{abdollahzadeh2014signed}, \cite{henning2003defending} and \cite{stewart1999defend} for more details). It is obvious that for every graph $G$ of order $n$ we have $\gamma_{SR}(G) \leq n$, because assigning +1 to each vertex yields a SRDF. In \cite{abdollahzadeh2014signed} Ahangar et al. presented various lower and upper bounds on the signed Roman domination number of a graph in terms of its order, size and vertex degrees. For instance, they showed that for a graph $G$ with $n$ vertices, we have
\begin{equation}\label{eq:aa}
 \gamma_{SR}(G)\geq \left(\frac{-2\Delta^2+2\Delta\delta+\Delta+2\delta+3}{(\Delta+1)(2\Delta+\delta+3)}\right)n.
\end{equation}Moreover, they also showed that if $G$ is a graph with $n$ vertices and $m$ edges with no isolated vertex, then we have
\begin{equation}\label{eq:aaa}
    \gamma_{SR}(G)\geq \frac{3n-4m}{2}.
\end{equation}They investigated the relation between $\gamma_{SR}$ and some other graphical parameters, and the signed Roman domination number of some special bipartite graphs. It is proved in \cite{abdollahzadeh2014signed} that  $\gamma_{SR}(K_n)=1$ for each $n\neq 3$, $\gamma_{SR}(K_3)=2, \gamma_{SR}(C_n)=\left\lceil\dfrac{2n}{3}\right\rceil, \gamma_{SR}(P_n)= \left\lfloor\dfrac{2n}{3}\right\rfloor$, and that the only $n$-vertex graph $G$ with $ \gamma_{SR}(G)=n$ is the empty graph $\overline{K_n}$.

Note that each SRDF $f$ of $G$ is uniquely determined by the ordered partition $(V_{-1}, V_1, V_2)$ of $V(G)$, where $V_i = \set{u \in V(G) : f(u) = i}$ for each $i \in\set{ {-1, +1, 2}}$.
Specially, $w(f) = 2|V_2| + |V_1|- |V_{-1}|$. For simplicity, we usually write $f = (V_{-1}, V_1, V_2)$ and,
when $U \subseteq V (G)$ we denote the sum $\sum_{u\in U}f(u)$ by $ f(U)$.  If $w(f) = \gamma_{SR} (G)$, then $f$ is
called a $\gamma_{SR} (G)$-\textit{function} (also known as an \textit{optimal SRDF}) on $G$. There have been several follow up work on SRDFs after the work of Ahangar et al. \cite{abdollahzadeh2014signed}. For instance, Behtoei, Vatandoost and Azizi \cite{BehtoeiVatandoostAzizi} studied the signed Roman domination number of the join of graphs, and determined its value for the join of cycles, wheels, fans, and friendship graphs; while, Hong et. al. \cite{HongYuZhaZhang} determined its value spider graphs and double star graphs.

In this paper, we investigate the signed Roman domination number of the Ladder graph $LG_n$, Circular Ladder graph $LC_n$ and their complement graphs $LG^C_n$ and $LC_n^C$. The paper is arranged as follows: in Section \ref{sec:lg} we evaluate the signed Roman domination number for $LG_n$ and its complement, in Section \ref{sec:cl} we evaluate the signed Roman domination number for $LC_n$ and its complement, we end the paper with some concluding remarks in Section \ref{sec:conc}.

\section {Signed Roman domination number for Ladder graphs \texorpdfstring{$LG_n$}{LGn} and its complement}\label{sec:lg}

 A \textit{Cartesian product} of two graphs  $G_1$ and $G_2$ is the graph  $G_1 \Cross G_2$  with the vertex set $V(G_1) \Cross V(G_2)$ in which two vertices $(u_1, v_1)$ and $(u_2, v_2)$ are adjacent in $G_1 \Cross G_2$ if and only if either $u_1 = u_2$ and ${v_1 v_2} \in E(G_2)$ or $v_1 = v_2 $ and ${u_1 u_2} \in E(G_1)$. We are interested in the following cartesian product in this section.

\begin{definition}\label{laddergraph} The {\emph{Ladder graph}} of order $2n$, denoted by $LG_{n}$, is the Cartesian product of two path graphs, one of which has only one edge. In other words $LG_{n}:=~P_{2}\Cross P_{n}$, where $P_n$ is the path graph on $n$ vertices.
\end{definition}

Note, by the definition of $LG_n$, one can write
\begin{equation*}\label{ladvertset}
\centering
\begin{aligned}
V(LG_{n})&~=~\left\{(1,i), (2,i)|~i\in[n]\right\},~\text{and}\\
E(LG_{n})&~=~\{\{(1,i),(2,i)\}|~1\leq i \leq n\}\cup \{\{(1,i),(1,i+1)\},\{(2,i),(2,i+1)\}|~1\leq i\leq n-1\}.
\end{aligned}
\end{equation*}
For simplicity, we encode the vertices $(1,i)$ and $(2,i)$ with $1i$ and $2i$ for $i=1,2,\ldots,n$.

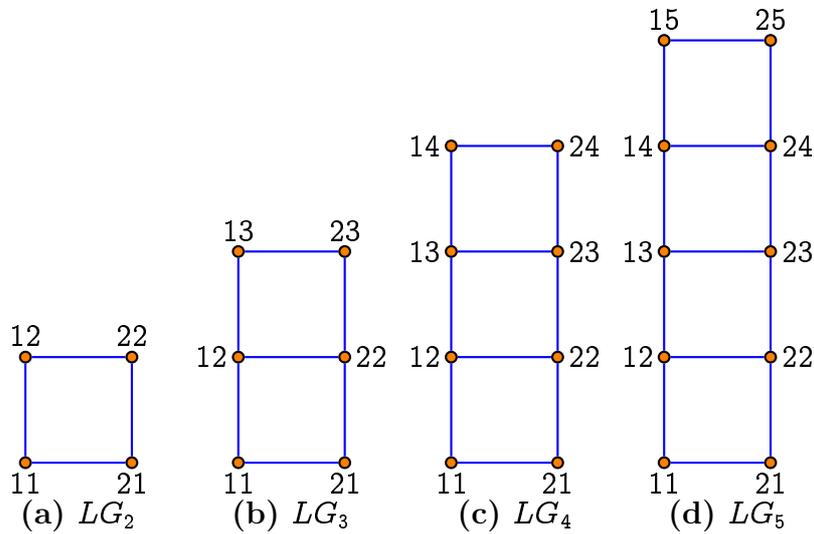
\begin{figure}[htb!]
\begin{center}
\begin{tikzpicture}
[inner sep=0.5mm, place/.style={circle,draw=black,fill=orange,thick},scale=0.70]
\node[place] (v_1) at (-2,-1) [label=below:$11$] {};
\node[place] (v_2) at (0,-1) [label=below:$21$] {}edge [-,thick,blue](v_1);
\node[place] (v_3) at (-2,1) [label=above:$12$] {}edge [-,thick,blue](v_1);
\node[place] (v_4) at (0,1) [label=above:$22$] {}edge [-,thick,blue](v_3) edge [-,thick,blue](v_2);
\node (dots) at (-1,-2) [label=center:{\bf (a) $LG_2$}]{};
[inner sep=0.5mm, place/.style={circle,draw=black,fill=orange,thick}]
\node[place] (v_1) at (2,-1) [label=below:$11$] {};
\node[place] (v_2) at (4,-1) [label=below:$21$] {}edge [-,thick,blue](v_1);
\node[place] (v_3) at (2,1) [label=left:$12$] {}edge [-,thick,blue](v_1);
\node[place] (v_4) at (4,1) [label=right:$22$] {}edge [-,thick,blue](v_3) edge [-,thick,blue](v_2);
\node[place] (v_5) at (2,3) [label=above:$13$] {}edge [-,thick,blue](v_3);
\node[place] (v_6) at (4,3) [label=above:$23$] {}edge [-,thick,blue](v_4) edge [-,thick,blue](v_5);
\node (dots) at (3,-2) [label=center:{\bf (b) $LG_3$}]{};
[inner sep=0.5mm, place/.style={circle,draw=black,fill=orange,thick}]
\node[place] (v_1) at (6,-1) [label=below:$11$] {};
\node[place] (v_2) at (8,-1) [label=below:$21$] {}edge [-,thick,blue](v_1);
\node[place] (v_3) at (6,1) [label=left:$12$] {}edge [-,thick,blue](v_1);
\node[place] (v_4) at (8,1) [label=right:$22$] {}edge [-,thick,blue](v_3) edge [-,thick,blue](v_2);
\node[place] (v_5) at (6,3) [label=left:$13$] {}edge [-,thick,blue](v_3);
\node[place] (v_6) at (8,3) [label=right:$23$] {}edge [-,thick,blue](v_4) edge [-,thick,blue](v_5);
\node[place] (v_7) at (6,5) [label=left:$14$] {}edge [-,thick,blue](v_5);
\node[place] (v_8) at (8,5) [label=right:$24$] {}edge [-,thick,blue](v_6) edge [-,thick,blue](v_7);
\node (dots) at (7.2,-2) [label=center:{\bf (c) $LG_4$}]{};
[inner sep=0.5mm, place/.style={circle,draw=black,fill=orange,thick}]
\node[place] (v_1) at (10,-1) [label=below:$11$] {};
\node[place] (v_2) at (12,-1) [label=below:$21$] {}edge [-,thick,blue](v_1);
\node[place] (v_3) at (10,1) [label=left:$12$] {}edge [-,thick,blue](v_1);
\node[place] (v_4) at (12,1) [label=right:$22$] {}edge [-,thick,blue](v_3) edge [-,thick,blue](v_2);
\node[place] (v_5) at (10,3) [label=left:$13$] {}edge [-,thick,blue](v_3);
\node[place] (v_6) at (12,3) [label=right:$23$] {}edge [-,thick,blue](v_4) edge [-,thick,blue](v_5);
\node[place] (v_7) at (10,5) [label=left:$14$] {}edge [-,thick,blue](v_5);
\node[place] (v_8) at (12,5) [label=right:$24$] {}edge [-,thick,blue](v_6) edge [-,thick,blue](v_7);
\node[place] (v_9) at (10,7) [label=above:$15$] {}edge [-,thick,blue](v_7);
\node[place] (v_10) at (12,7) [label=above:$25$] {}edge [-,thick,blue](v_8) edge [-,thick,blue](v_9);
\node (dots) at (11.2,-2) [label=center:{\bf (d) $LG_5$}]{};
\end{tikzpicture}
\end{center}
    \caption{Ladder graphs of orders $2, 3, 4$ and $5$.}
   \label{fig:p2p6p7}
\end{figure}

It is easy to observe that, for $LG_n$ we have
\begin{itemize}
\item[(i)] $|E(LG_{n})|=3n-2$, and
\item[(ii)] $LG_{n}$ has four vertices of degree $2$ and $2n-4$ vertices of degree $3$.
\end{itemize}

We now proceed to find the signed Roman domination number for Ladder graphs.
\begin{figure}
\begin{center}
\begin{tikzpicture}
[inner sep=0.5mm, place/.style={circle,draw=black,fill=orange,thick},scale=0.70]
\node[place] (v_1) at (-2,-1) [label=below:$-1$] {};
\node[place] (v_2) at (0,-1) [label=below:$1$] {}edge [-,thick,blue](v_1);
\node[place] (v_3) at (-2,1) [label=left:$2$] {}edge [-,thick,blue](v_1);
\node[place] (v_4) at (0,1) [label=right:$1$] {}edge [-,thick,blue](v_3) edge [-,thick,blue](v_2);
\node (dots) at (-1,-2) [label=center:{\bf (a) $LG_2$}]{};
[inner sep=0.5mm, place/.style={circle,draw=black,fill=orange,thick}]
\node[place] (v_1) at (2,-1) [label=below:$-1$] {};
\node[place] (v_2) at (4,-1) [label=below:$1$] {}edge [-,thick,blue](v_1);
\node[place] (v_3) at (2,1) [label=left:$2$] {}edge [-,thick,blue](v_1);
\node[place] (v_4) at (4,1) [label=right:$1$] {}edge [-,thick,blue](v_3) edge [-,thick,blue](v_2);
\node[place] (v_5) at (2,3) [label=left:$-1$] {}edge [-,thick,blue](v_3);
\node[place] (v_6) at (4,3) [label=right:$1$] {}edge [-,thick,blue](v_4) edge [-,thick,blue](v_5);
\node (dots) at (3,-2) [label=center:{\bf (b) $LG_3$}]{};

[inner sep=0.5mm, place/.style={circle,draw=black,fill=orange,thick}]
\node[place] (v_1) at (6,-1) [label=below:$-1$] {};
\node[place] (v_2) at (8,-1) [label=below:$1$] {}edge [-,thick,blue](v_1);
\node[place] (v_3) at (6,1) [label=left:$2$] {}edge [-,thick,blue](v_1);
\node[place] (v_4) at (8,1) [label=right:$1$] {}edge [-,thick,blue](v_3) edge [-,thick,blue](v_2);
\node[place] (v_5) at (6,3) [label=left:$-1$] {}edge [-,thick,blue](v_3);
\node[place] (v_6) at (8,3) [label=right:$-1$] {}edge [-,thick,blue](v_4) edge [-,thick,blue](v_5);
\node[place] (v_7) at (6,5) [label=left:$1$] {}edge [-,thick,blue](v_5);
\node[place] (v_8) at (8,5) [label=right:$2$] {}edge [-,thick,blue](v_6) edge [-,thick,blue](v_7);
\node (dots) at (7.2,-2) [label=center:{\bf (c) $LG_4$}]{};
[inner sep=0.5mm, place/.style={circle,draw=black,fill=orange,thick}]
\node[place] (v_1) at (10,-1) [label=below:$-1$] {};
\node[place] (v_2) at (12,-1) [label=below:$1$] {}edge [-,thick,blue](v_1);
\node[place] (v_3) at (10,1) [label=left:$2$] {}edge [-,thick,blue](v_1);
\node[place] (v_4) at (12,1) [label=right:$1$] {}edge [-,thick,blue](v_3) edge [-,thick,blue](v_2);
\node[place] (v_5) at (10,3) [label=left:$-1$] {}edge [-,thick,blue](v_3);
\node[place] (v_6) at (12,3) [label=right:$-1$] {}edge [-,thick,blue](v_4) edge [-,thick,blue](v_5);
\node[place] (v_7) at (10,5) [label=left:$1$] {}edge [-,thick,blue](v_5);
\node[place] (v_8) at (12,5) [label=right:$2$] {}edge [-,thick,blue](v_6) edge [-,thick,blue](v_7);
\node[place] (v_9) at (10,7) [label=left:$1$] {}edge [-,thick,blue](v_7);
\node[place] (v_10) at (12,7) [label=right:$-1$] {}edge [-,thick,blue](v_8) edge [-,thick,blue](v_9);
\node (dots) at (11.2,-2) [label=center:{\bf (d) $LG_5$}]{};
\end{tikzpicture}
\end{center}
\caption{The SRDF for the Ladder graphs of order $2, 3, 4$ and $5$.}\label{fig:srdflader25}
\end{figure}
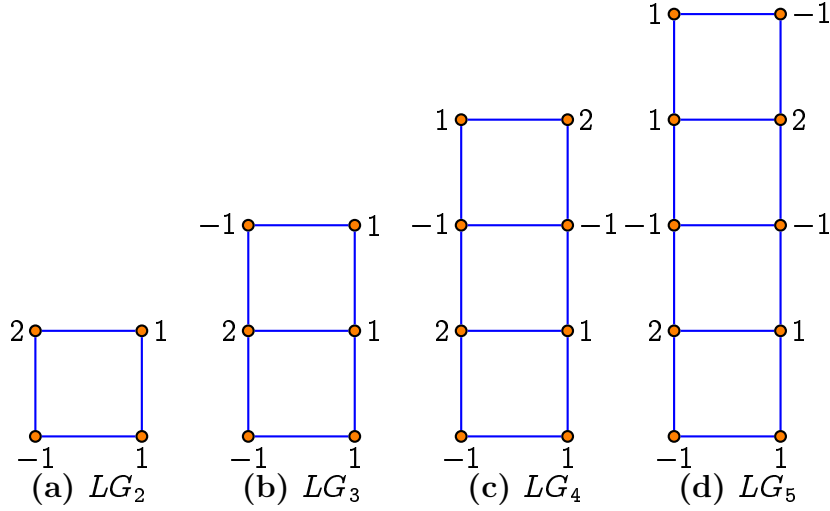

\begin{theorem}\label{thm:1}
Let $LG_{n}$ be the Ladder graph of order $2n$ with $n\geq2$. Then 
$$ \gamma_{SR} (LG_{n})=  \left\lfloor\dfrac{n+2}{2}\right\rfloor +1.$$
\end{theorem}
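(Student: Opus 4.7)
The proof decomposes into matching upper and lower bounds, which I would handle separately.

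For the upper bound, I would produce an explicit SRDF $f$ of weight $\lfloor(n+2)/2\rfloor + 1$, building on the small-$n$ patterns of Figure~\ref{fig:srdflader25}. Fix the first block by setting $f(1,1)=-1$, $f(2,1)=1$, $f(1,2)=2$, $f(2,2)=1$. Then label the remaining columns using a two-column repeat: a \emph{light} column $(-1,-1)$ of weight $-2$ followed by a \emph{heavy} column (a permutation of $(1,2)$ of weight $+3$), alternating the position of the $2$ between top and bottom rows in successive heavy columns so that every $-1$-vertex in a light column sees a $2$-vertex in one of the two adjacent columns of the appropriate row. Adjust the terminal column based on the parity of $n$ so that weight~$0$ is added when $n$ is odd and weight~$+3$ when $n$ is even. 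Verifying conditions (i) and (ii) reduces to a finite inspection of the interfaces between consecutive columns, and summing column weights gives $w(f)=\lfloor(n+2)/2\rfloor+1$.

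For the lower bound, let $f$ be any SRDF on $LG_n$ and write $s_i := f(1,i)+f(2,i)$, so $w(f)=\sum_{i=1}^n s_i$. I would combine three ingredients. First, summing condition~(i) over all $2n$ vertices gives $\sum_v f(v)\cdot|N[v]|\ge 2n$; plugging in $|N[v]|=3$ at the four corners and $|N[v]|=4$ elsewhere yields the global inequality
\[
4\,w(f)\;\ge\;2n + s_1 + s_n.
\]
Second, at a boundary: the constraint at $(1,1)$ gives $f(1,2)\ge 1-s_1$ and at $(2,1)$ gives $f(2,2)\ge 1-s_1$, which together with $f\in\{-1,1,2\}$ and condition~(ii) (applied to any $-1$-vertex in column~$1$, whose only potential $2$-neighbours are $(2,1)$ and $(1,2)$) forces $s_1\ge 0$ and a case-by-case lower bound on $s_1+s_2$; the same holds at the other end. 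Third, interior $-2$-columns are costly: if $s_i=-2$ then both vertices of column~$i$ are $-1$, so condition~(i) at each of them forces $f(1,i-1)+f(1,i+1)\ge 3$ and $f(2,i-1)+f(2,i+1)\ge 3$, hence $s_{i-1},s_{i+1}\ge 2$ and in fact $s_{i-1}+s_{i+1}\ge 6$. In particular the set $T=\{i:s_i=-2\}$ is pairwise non-adjacent and disjoint from $\{1,n\}$.

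Combining these, I would split on $T$ and on the parity of $n$: each $i\in T$ contributes $-2$, its two neighbours contribute at least $2$ each, the remaining interior columns contribute at least $0$, and the boundary columns receive an extra bump from the case analysis above. A careful tally, together with the global inequality, gives the desired $w(f)\ge\lfloor(n+2)/2\rfloor+1$. The main technical obstacle is this final balancing step: when two elements of $T$ sit at distance exactly~$2$ the local forcing on $s_{i\pm 1}$ overlaps, and the boundary contributions must be integrated with the interior ones without double-counting; the parity split is essential to absorb the off-by-one in the target formula.
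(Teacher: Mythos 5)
Your upper bound is sound: the first block plus the alternating light/heavy two-column repeat is, up to relabelling, the same mod-$4$ pattern the paper uses, and the column-by-column weight count does give $\lfloor (n+2)/2\rfloor+1$; the deferred interface check is finite and routine. The substantive issue is the lower bound, where your three ingredients are each individually correct (I checked the global inequality $4w(f)\ge 2n+s_1+s_n$, the fact that $s_1,s_n\ge 0$, and the forcing $s_{i\pm1}\ge 2$, $s_{i-1}+s_{i+1}\ge 6$ around any $i$ with $s_i=-2$) but provably do not jointly imply $w(f)\ge\lfloor (n+2)/2\rfloor+1$. Concretely, take $n$ even and the column-sum profile $s_1=s_n=0$ with the remaining $s_i\in\{0,1\}$ chosen so that $\sum_i s_i=n/2$: this satisfies your global inequality with equality ($4\cdot(n/2)=2n+s_1+s_n$), makes $T=\varnothing$ so your third ingredient is vacuous, and respects $s_1,s_n\ge 0$, yet its weight is $n/2$, which is $2$ below the target. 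So the ``careful tally'' you defer cannot be completed from the stated inequalities alone; the gap is a missing idea, not bookkeeping.

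What is missing is, first, a systematic use of condition (ii) \emph{in the interior}: a column with $s_i=0$ contains a $-1$ that must see a $2$ in column $i-1$, $i$, or $i+1$, and it is this forced density of $2$'s (together with condition (i) at those $2$'s, whose neighbourhoods are dragged down by the surrounding $-1$'s) that rules out cheap profiles like the one above. Second, the amortization must run over windows longer than two columns: the natural pairwise bound $s_i+s_{i+1}\ge 1$ is false, since $s_{i-1},s_i,s_{i+1}=4,-2,2$ is consistent with all your local constraints, so one needs either a length-$\ge 3$ window inequality or a discharging scheme in which the surplus of heavy columns pays for both adjacent light columns without double-counting. For what it is worth, the paper's own lower-bound argument is only a ``greedy algorithm'' sketch verified on $LG_6$ in one case, so your plan is at least as principled as the published one --- but as written it does not close the lower bound, and closing it is where the real work of this theorem lies.
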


\noindent From \eqref{eq:aa} we get a bound of
\[
\gamma_{SR}(LG_n)\geq \frac{2n}{11}.
\]

\begin{proof}[Proof of Theorem \ref{thm:1}]
For the graph $LG_{n}$ consider the function \[f:V(LG_{n}) \rightarrow \set{-1,1,2}\] given by
$$f(v)=\begin{cases}-1      &   \text{if } v=(1,i), i\geq 1 \text { and } i \text{ is odd, except when $i=n\equiv 1 \pmod 4$;}\\ & v=(2,i), i \geq 3 \text{ and } i \text{ is odd, except when $i=n\equiv 3 \pmod 4$;}\\
                                    1      &    \text{if } v=(2,1), v=(1,i) \text{ and } i  \equiv 0 \pmod {4}, v=(1,n)\text{ for $n\equiv 1 \pmod 4$;} \\ & v=(2,i) \text{ and } i  \equiv 2\pmod {4}, v=(2,n) \text{ and } n  \equiv  2, 3 \pmod {4}; \\
                                    2      &  \text{if }  v=(1,i) \text{ and } i  \equiv 2 \pmod {4},  v=(2,i) \text{ and } i  \equiv 0 \pmod {4}.
           \end{cases} $$
\newline
By our construction, it is easy to see that $f$ is a SRDF. Except for the $(1,1), (2,1), (1,n)$ and $(2,n)$ vertices, every vertex with label $2$ is adjacent to three vertices, two of which are labelled $-1$ and one of them is labelled $1$; every vertex with label $-1$ is adjacent to three vertices, labelled $1, 2$ and $-1$; and, every vertex with label $1$ is adjacent to three vertices, two of which are labelled $-1$ and one of them is labelled $2$. This establishes the conditions for the function be a SRDF\footnote{We write this explanation explicitly here, but from now on we will omit such an explanation as the next cases are similar.}. By the construction of our labels, it is easy to verify the following properties:
\begin{equation*}
	\begin{aligned}
|V_{-1}|&= n-1,\\
|V_1|&= \begin{cases}      \lfloor n/2\rfloor +1, & \text{if }n  \text { is even,}  \\
                                            \lfloor n/2\rfloor +2, &    \text{if }n \text { is odd,} 
           \end{cases} \\
|V_2|&=  \left\lfloor\dfrac{n}{2}\right\rfloor.
\end{aligned}
\end{equation*}

From now on, we will use the following fact without commentary: for all $n\geq 0$, we have 
$$\left\lfloor\dfrac{n}{2}\right\rfloor = \begin{cases}      n/2, & \text{if }n  \text { is even,}  \\
                                            (n-1)/2, &    \text{if }n \text { is odd.}
           \end{cases} $$
With this in hand, it is easy to see that the weight of the function $f$ defined above is given by
\[
2|V_2|+|V_1|-|V_{-1}|=\left\lfloor\dfrac{n+2}{2}\right\rfloor+1.
\]
Hence, we conclude that $ \gamma_{SR} (LG_{n})\leq  \left\lfloor\dfrac{n+2}{2}\right\rfloor +1  $, for all $n\geq 2$.

We now want to show that this is the minimum weight. Assume that \[g: V (LG_{n}) \rightarrow \set{-1,1,2}\] is another arbitrary SRDF  on $LG_{n}$. By the construction of the Ladder graph there are $4$ vertices of degree $2$, namely, the vertices $x=(1,1), y=(2,1), z=(1,n), \text{ and } w=(2,n)$. Clearly it is not possible to have $g(x)= g(y) = -1$, as this would violate the first condition in the definition of a SRDF. A similar argument also holds for the vertices $z$ and $w$. So we must have one of the following cases:
\begin{itemize}
    \item $g(x)=-1$ and $g(y)=1$ or $g(x)=1$ and $g(y)=-1$,
    \item $g(x)= g(y)=1$,
    \item $g(x)=1$ and $g(y)=2$ or $g(x)=2$ and $g(y)=1$,
    \item $g(x)=-1$ and $g(y)=2$ or $g(x)=2$ and $g(y)=-1$, and
    \item $g(x)= g(y)=2$.
\end{itemize}

It is easy to check that following the greedy algorithm, each of the above cases leads to the following inequalities:
\begin{equation*}
	\begin{aligned}
 |V_{-1}|&\leq (n-1),\\
|V_1|&\geq \begin{cases}      \lfloor n/2\rfloor +1, & \text{if }n  \text { is even},  \\
                                               \left\lfloor n/2\right\rfloor +2, &    \text{if }n \text { is odd,} 
           \end{cases}\\
|V_2|&\geq  \left\lfloor\dfrac{n}{2}\right\rfloor .
\end{aligned}
\end{equation*}
Here by greedy algorithm, we mean a constructive step by step approach where we assign our new label to a vertex by considering two key points: Firstly, the new label should comply with the SRDF conditions, and secondly, we must select the minimum possible value from the set $\set{-1,1,2}$ with respect to the previous labeled vertices. This approach will be clearer by doing an example:

Let's consider the ladder graph $LG_6$. Following our recipe we have the following SRDF labelling:

$$f(v)=\begin{cases}-1      &   \text{if } v=(1,i), i\geq 1 \text { and } i \text{ is odd, except when $i=n\equiv 1 \pmod 4$;}\\ & v=(2,i), i \geq 3 \text{ and } i \text{ is odd, except when $i=n\equiv 3 \pmod 4$;}\\
                                    1      &    \text{if } v=(2,1), v=(1,i) \text{ and } i  \equiv 0 \pmod {4}, v=(1,n)\text{ for $n\equiv 1 \pmod 4$;} \\ & v=(2,i) \text{ and } i  \equiv 2\pmod {4}, v=(2,n) \text{ and } n  \equiv  2, 3 \pmod {4}; \\
                                    2      &  \text{if }  v=(1,i) \text{ and } i  \equiv 2 \pmod {4},  v=(2,i) \text{ and } i  \equiv 0 \pmod {4}.
           \end{cases} $$

$$ \gamma_{SR} (LG_{6})=  \left\lfloor\dfrac{n+2}{2}\right\rfloor +1 = \left\lfloor\dfrac{6+2}{2}\right\rfloor +1 =5.$$

\begin{figure}
\begin{center}
\begin{tikzpicture}
[inner sep=0.5mm, place/.style={circle,draw=black,fill=orange,thick},scale=0.50]
[inner sep=0.5mm, place/.style={circle,draw=black,fill=orange,thick}]
\node[place] (v_1) at (10,-1) [label=left:$-1$] {x};
\node[place] (v_2) at (12,-1) [label=right:$1$] {y}edge [-,thick,blue](v_1);
\node[place] (v_3) at (10,1) [label=left:$2$] {12}edge [-,thick,blue](v_1);
\node[place] (v_4) at (12,1) [label=right:$1$] {22}edge [-,thick,blue](v_3) edge [-,thick,blue](v_2);
\node[place] (v_5) at (10,3) [label=left:$-1$] {13}edge [-,thick,blue](v_3);
\node[place] (v_6) at (12,3) [label=right:$-1$] {23}edge [-,thick,blue](v_4) edge [-,thick,blue](v_5);
\node[place] (v_7) at (10,5) [label=left:$1$] {14}edge [-,thick,blue](v_5);
\node[place] (v_8) at (12,5) [label=right:$2$] {24}edge [-,thick,blue](v_6) edge [-,thick,blue](v_7);
\node[place] (v_9) at (10,7) [label=left:$-1$] {15}edge [-,thick,blue](v_7);
\node[place] (v_10) at (12,7) [label=right:$-1$] {25}edge [-,thick,blue](v_8) edge [-,thick,blue](v_9);
\node[place] (v_11) at (10,9) [label=left:$2$] {z}edge  [-,thick,blue](v_9);
\node[place] (v_12) at (12,9) [label=right:$1$] {w}edge [-,thick,blue](v_11) edge [-,thick,blue](v_10);
\node (dots) at (11.2,-2) [label=center:{\bf  $w(f(LG_6))$}]{};
\end{tikzpicture}
\end{center}
\end{figure}
\newpage
Now, let's assume another $g:V(LG_6)\rightarrow \set{-1,1,2}$ where $g(x)=g(y)=1$ we follow the greedy algorithm for this case as an illustration.

Just keep in mind that we construct our labeling under greedy algorithm with respect to $g$ in such a way that in each step we keep the two SRDF conditions satisfied and that we choose the minimum value for labeling our next vertices. Now we can assign $g(12)=-1$ or $g(22)=-1$ but not both since it violates the first condition of an SRDF. So we can assign $g(12)=-1$ and $g(22)=1$ to keep it minimum. 

Now the only option for vertex 13 is $g(13)=2$ since we already assigned $g(12)=-1$ and hence the second condition of SRDF must be satisfied. To keep our new labeling minimum, we have to assign $g(23)=1$. By following the same reasoning we can assign $g(14)=g(24)=-1$. For the next step, we can not assign $g(15)=-1$ since it violate the first condition of SRDF for vertex 14 with label $g(14)=-1$, so, to keep our g-labeling minimum we can put $g(15)=1$ and for vertex 25, we must assign $g(25)=2$ since $g(24)=-1$. Finally, for the vertices $w$ and $z$ we can assign $g(w)=-1$ and $g(z)=2$ to keep our labeling both valid and minimum with respect to the previous labelling.  

\begin{figure}
\begin{center}
\begin{tikzpicture}
[inner sep=0.5mm, place/.style={circle,draw=black,fill=orange,thick},scale=0.50]
[inner sep=0.5mm, place/.style={circle,draw=black,fill=orange,thick}]
\node[place] (v_1) at (10,-1) [label=left:$1$] {x};
\node[place] (v_2) at (12,-1) [label=right:$1$] {y}edge [-,thick,blue](v_1);
\node[place] (v_3) at (10,1) [label=left:$ -1$] {12}edge [-,thick,blue](v_1);
\node[place] (v_4) at (12,1) [label=right:$ 1$] {22}edge [-,thick,blue](v_3) edge [-,thick,blue](v_2);
\node[place] (v_5) at (10,3) [label=left:$ 2$] {13}edge [-,thick,blue](v_3);
\node[place] (v_6) at (12,3) [label=right:$ 1$] {23}edge [-,thick,blue](v_4) edge [-,thick,blue](v_5);
\node[place] (v_7) at (10,5) [label=left:$ -1$] {14}edge [-,thick,blue](v_5);
\node[place] (v_8) at (12,5) [label=right:$-1 $] {24}edge [-,thick,blue](v_6) edge [-,thick,blue](v_7);
\node[place] (v_9) at (10,7) [label=left:$ 1$] {15}edge [-,thick,blue](v_7);
\node[place] (v_10) at (12,7) [label=right:$2 $] {25}edge [-,thick,blue](v_8) edge [-,thick,blue](v_9);
\node[place] (v_11) at (10,9) [label=left:$1 $] {z}edge  [-,thick,blue](v_9);
\node[place] (v_12) at (12,9) [label=right:$ -1$] {w}edge [-,thick,blue](v_11) edge [-,thick,blue](v_10);
\node (dots) at (11.2,-2) [label=center:{\bf  $w(g(LG_6))$}]{};
\end{tikzpicture}
\end{center}
\end{figure}
Now, we can see that $$w(g)=6> 5=w(f).$$ This illustrate a greedy algorithm approach for the case where $g(x)=g(y)=1.$ The arguments for the other cases are very similar. 

Consequently, we have 
\begin{equation*}
w(g) = 2|V_2|+|V_1|-|V_{-1}|\geq \left\lfloor\dfrac{n+2}{2}\right\rfloor+1
\end{equation*}
This completes the proof. 
 \end{proof}


Now, we turn our attention to the complement of the Ladder graph $LG^C_n$ and determine their signed Roman domination number. As an illustration, the complement of $LG^C_n$ when $n=2$ and $3$ is shown in Figure \ref{fig:cp2p2p3} with $\gamma_{SR}{ (LG^C_2)}=2$ and $\gamma_{SR}{ (LG^C_3)}=3$.  

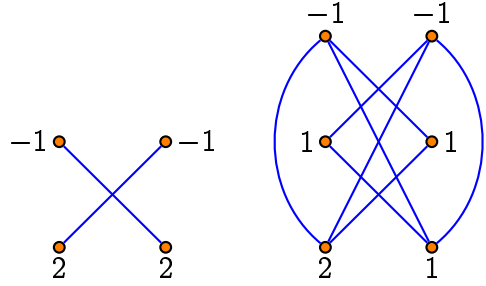
\begin{figure}[htb]
\begin{tikzpicture}
[inner sep=0.5mm, place/.style={circle,draw=black,fill=orange,thick},scale=0.7]
\node[place] (v_11) at (-7,-1) [label=below:$2$] {};
\node[place] (v_22) at (-5,-1) [label=below:$2$] {};
\node[place] (v_33) at (-7,1) [label=left:$-1$] {};
\node[place] (v_44) at (-5,1) [label=right:$-1$] {};
\tikzset{EdgeStyle/.append style = {blue, bend left = 50}}
\tikzset{EdgeStyle/.append style = {blue, bend right = 50}}
\tikzset{EdgeStyle/.append style = {blue, bend left=0}}
\Edge[](v_11)(v_44), 
\Edge[](v_22)(v_33), 
 [inner sep=0.5mm, place/.style={circle,draw=black,fill=orange,thick}]
\node[place] (v_1) at (-2,-1) [label=below:$2$] {};
\node[place] (v_2) at (0,-1) [label=below:$1$] {};
\node[place] (v_3) at (-2,1) [label=left:$1$] {};
\node[place] (v_4) at (0,1) [label=right:$1$] {};
\node[place] (v_5) at (-2,3) [label=above:$-1$] {};
\node[place] (v_6) at (0,3) [label=above:$-1$] {};
\tikzset{EdgeStyle/.append style = {blue, bend left = 50}}
\Edge[](v_1)(v_5)
\tikzset{EdgeStyle/.append style = {blue, bend right = 50}}
\Edge[](v_2)(v_6)
\tikzset{EdgeStyle/.append style = {blue, bend left=0}}
\Edge[](v_1)(v_4), \Edge[](v_1)(v_6), \Edge[](v_2)(v_3), \Edge[](v_2)(v_5), \Edge[](v_3)(v_6), \Edge[](v_4)(v_5)
\end{tikzpicture}
 \caption{SRDF for ${LG^C_2}$ and ${LG^C_3}$.}
 \label{fig:cp2p2p3}
\end{figure}

\begin{theorem}\label{thm2}
Let $ LG^C_n$ be complement of the Ladder graph. Then, for all $n\geq 4$, we have $$ \gamma_{SR}{ (LG^C_n)}=2.$$
\end{theorem}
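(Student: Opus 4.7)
The plan is to show $\gamma_{SR}(LG^C_n)\le 2$ by exhibiting an explicit SRDF of weight $2$ and to match it with $\gamma_{SR}(LG^C_n)\ge 2$ by a structural analysis. Both arguments hinge on the identity
\[
f\bigl(N_{LG^C_n}[v]\bigr) = w(f) - f\bigl(N_{LG_n}(v)\bigr),
\]
which holds because $V\setminus N_{LG^C_n}[v]=N_{LG_n}(v)$; so SRDF condition (i) on $LG^C_n$ is equivalent to $f(N_{LG_n}(v))\le w(f)-1$ for every vertex $v$, a constraint that now lives on the much sparser graph $LG_n$.

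For the upper bound I would set $f(1,1)=f(1,n)=2$, $f(1,i)=-1$ for $2\le i\le n-1$, $f(2,1)=f(2,n)=1$, $f(2,2)=f(2,n-1)=-1$, and $f(2,i)=1$ for $3\le i\le n-2$, giving weight $(6-n)+(n-4)=2$. Checking $f(N_{LG_n}(v))\le 1$ is a short case split by the column of $v$: at $(1,1)$ one gets $1-1=0$, at $(1,2)$ one gets $2-1-1=0$, and all middle-column and row-$2$ cases give sums in $\{-3,-2,-1,0,1\}$. Condition (ii) holds because, for $n\ge 4$, every $-1$ vertex is non-adjacent in $LG_n$ to at least one of the two $+2$ vertices $(1,1),(1,n)$, hence adjacent to it in $LG^C_n$.

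For the lower bound, assume $w(f)\le 1$; then $f(N_{LG_n}(v))\le w(f)-1\le 0$ for every $v$. At each of the four corners of $LG_n$ the degree is $2$, and two labels in $\{-1,1,2\}$ summing to at most $0$ cannot include a $+2$; so none of the eight vertices in columns $1,2,n-1,n$ is labeled $+2$, and in particular each corner takes a value in $\{-1,1\}$, giving the corner sum $A\le 4$. Summing $f(N_{LG_n}(v))\le w(f)-1$ over all $v$ and using $\sum_v f(N_{LG_n}(v))=2A+3B=3w(f)-A$ (where $B$ is the sum of $f$ over the $2n-4$ degree-$3$ vertices and $A+B=w(f)$) gives $A\ge(3-2n)w(f)+2n$. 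For $w(f)\le 0$ this forces $A\ge 2n>4$, a contradiction, while $w(f)=1$ forces $A=4$, so every corner is $+1$. The four corner inequalities then force every near-corner to be $-1$, and the degree-$3$ inequalities at each near-corner force the third-column vertices $(1,3),(2,3),(1,n-2),(2,n-2)$ to be $-1$.

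For $n\in\{4,5,6\}$ these deductions assign every vertex and produce $w(f)\in\{-4,-2,0\}$, contradicting $w(f)=1$. For $n\ge 7$ the columns $4\le i\le n-3$ remain, and their total $f$-contribution must equal $5$. Setting $S_i:=f(1,i)+f(2,i)$ with boundary values $S_3=S_{n-2}=-2$, adding the two degree-$3$ inequalities at $(1,i)$ and $(2,i)$ yields
\[
S_{i-1}+S_i+S_{i+1} \le 0 \qquad (4\le i\le n-3).
\]
Summing these triple inequalities with their overlap multiplicities and using $S_j\le 4$ at $j=4,n-3$ yields $\sum_{i=4}^{n-3} S_i\le 4$, contradicting the required value $5$. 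I expect the main technical hurdle to be this last LP-style bound: the multiplicity count behaves slightly differently for $n=7,8$ (where the chain has only one or two triples) and for $n\ge 9$, so a short separate check of $n=7,8$ alongside the general calculation for $n\ge 9$ is the cleanest route.
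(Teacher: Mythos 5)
Your proposal is correct, and while your upper-bound labelling is exactly the one the paper uses ($+2$ at $(1,1),(1,n)$, $-1$ on the rest of row $1$ and at $(2,2),(2,n-1)$, $+1$ elsewhere in row $2$), your lower bound is a genuinely different and substantially more complete argument. The key device is the identity $f(N_{LG^C_n}[v])=w(f)-f(N_{LG_n}(v))$, which converts condition (i) on the dense complement into the constraint $f(N_{LG_n}(v))\le w(f)-1$ on the sparse ladder; from there, assuming $w(f)\le 1$, the degree-weighted sum $\sum_v f(N_{LG_n}(v))=3w(f)-A$ kills $w(f)\le 0$ outright, and $w(f)=1$ forces $A=4$ (note $A\ge 3$ plus the evenness of a sum of four $\pm1$'s is what pins $A=4$ -- worth stating), after which the cascade of forced $-1$'s and the chain inequalities $S_{i-1}+S_i+S_{i+1}\le 0$ give $\sum_{i=4}^{n-3}S_i\le 4<5$; I checked the multiplicity counts for $n\ge 9$ and the degenerate cases $n=7,8$, and all close. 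By contrast, the paper's lower-bound argument only tests a few specific single-vertex deviations from its chosen function $f$ (and at one point appeals to ``otherwise $w(g)>w(f)$,'' which assumes what is to be proved), so it does not actually rule out an arbitrary SRDF of weight $\le 1$; your summation argument does, and is the more rigorous route. One presentational caution: when you assert ``none of the eight vertices in columns $1,2,n-1,n$ is labeled $+2$,'' make explicit that each such vertex occurs as a neighbour of some degree-$2$ corner, which is where the no-$+2$ conclusion comes from.
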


From \eqref{eq:aa}, we get the bound of $\gamma_{SR}(LG^C_n)>1$ for $n\geq 2$.

\begin{proof}[Proof of Theorem \ref{thm2}]
For the graph ${ LG^C_n}$, consider the function \[f: V( LG^C_n )\rightarrow \set{-1, 1, 2}\] given by 
$$f(v)=\begin{cases}               -1      &   \text{if } v=(1,i),  2\leq i\leq n-1, v=(2,2) \text{ and }v=(2,n-1);\\
                                    1      &    \text{if } v=(2,i) \text{ with } i\neq 2, n-1;\\ 
                                    2      &   \text{if } v=(1,1) \text{ and } v=(1,n).
           \end{cases} $$
Clearly, $f$ is a signed Roman domination function. By the construction of our labels, it is easy to verify the following
\[
|V_{-1}|=n, \quad |V_1|=n-2, \quad \text{and} \quad |V_2|=2.
\]
Then, we have
\[
\gamma_{SR} {(LG^C_n)}\leq 2|V_2|+|V_1|-|V_{-1}|=2.
\]

We now show that this is the minimum weight function. Let us assume that $$g: V(LG^C_n) \rightarrow \set{-1, 1, 2}$$ is another arbitrary SRDF on $LG^C_n,~\text{where}~ n\geq 4$. By the construction of $LG^C_n$, there are four vertices of degree $2n-3$ and the other $2n-4$ internal vertices are of degree $2n-4$. Let the four vertices that have degree $2n-3$ be $x=(1,1), y=(2,1), z=(1,n), \text{ and } w=(2,n)$. 

Clearly $g(x)=g(y)=1$ is not possible as this would violate the second condition of being a SRDF, as we know $g(1,i)=-1$ for $2\leq i\leq n-1$ in this case, otherwise we will get $w(g) > w(f)$. Similarly we can conclude that $g(w) = g(z)=1$ is not possible. 

If $g(y)=-1$, since $y$ is adjacent to all vertices $v=(1,i)$ where $2\leq i\leq n$ and $v=(2,i)$ where $3\leq i\leq n,$ in $LG_n^C$, we have
\begin{equation*}
\begin{split}
 \sum_{v\in N_{LG^C_n}[y]}g(v) &= \sum_{i=2}^n g(1,i) + \sum_{i=1, ~i\neq 2}^n g(2,i)\\
 & = (-1)\cdot(n-2)+2+1\cdot(n-3)-2 <0.
\end{split}
\end{equation*}
This clearly violates the second condition for $g$ being a SRDF. Similarly if $g(2,i)=-1$ for $3\leq i \leq n$ and $i\neq n-1$, we will get $\sum_{v\in N_{LG^C_n}[y]}g(v)<1$, which also violates the second condition for $g$ being a SRDF. This completes the proof.
\end{proof}

\section {Signed Roman domination number for Ladder graphs Circular Ladder graph and its complement}\label{sec:cl}

In this section we look at a different cartesian product of graphs, which we define below.
\begin{definition}\label{circladdergraph} The {\emph{circular Ladder graph}} of order $2n$, denoted by $LC_{n}$, is the Cartesian product of a cycle graph $C_n,~ n\geq3$ and an edge $P_2$. In other words:
\begin{equation*}\label{circladeq1}
\centering
LC_{n}:=~C_{n}\Cross P_{2}.
\end{equation*}
\end{definition}
It is easy to check that $LC_{n}$ has $2n$ vertices and $3n$ edges. As an illustration, the circular Ladder graphs of orders $3, 4$ and $5$ are shown in Figure~\ref{fig:circlader25}. For simplicity, we encode the vertices $(1,i)$ and $(2,i)$ with $1i$ and $2i$ for $i=1,2,\ldots ,n$.

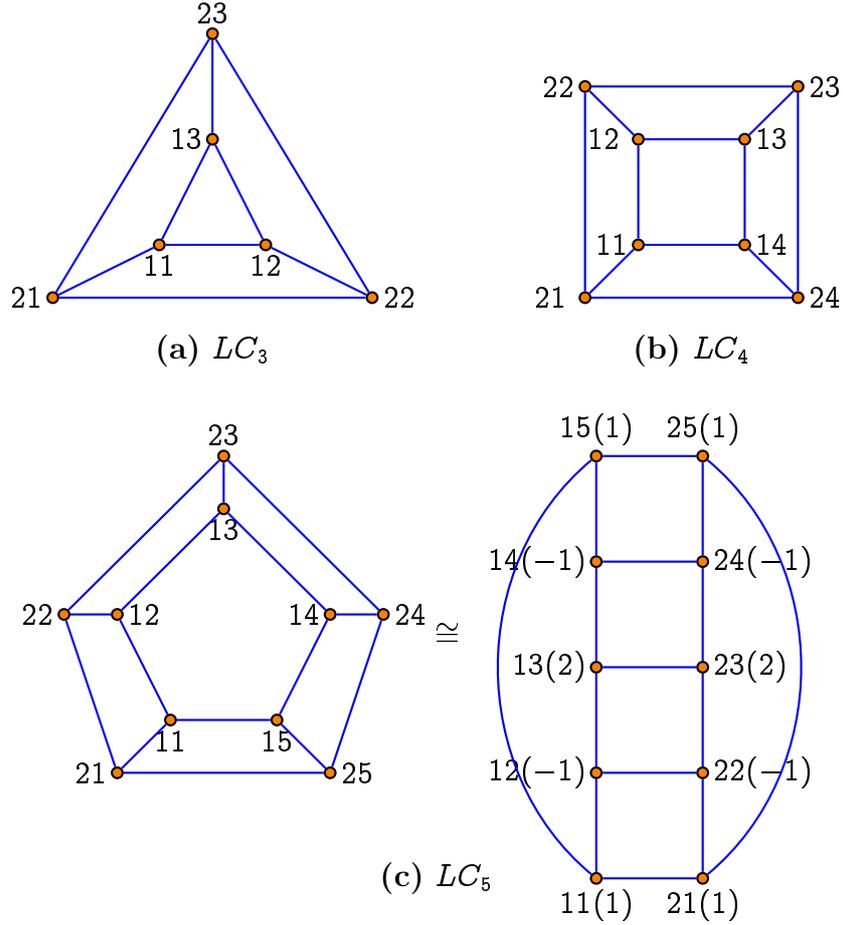
\begin{figure}[htbp]
\begin{center}
\begin{tikzpicture}
[inner sep=0.5mm, place/.style={circle,draw=black,fill=orange,thick},scale=0.7]
\node[place] (v_1) at (-4,-1) [label=below:$11$] {};
\node[place] (v_2) at (-3,1) [label=left:$13$] {}edge [-,thick,blue](v_1);
\node[place] (v_3) at (-2,-1) [label=below:$12$] {}edge [-,thick,blue](v_2) edge [-,thick,blue](v_1);
\node[place] (v_4) at (-6,-2) [label=left:$21$] {}edge [-,thick,blue](v_1) ;
\node[place] (v_5) at (-3,3) [label=above:$23$] {}edge [-,thick,blue](v_2) edge [-,thick,blue](v_4);
\node[place] (v_6) at (0,-2) [label=right:$22$] {}edge [-,thick,blue](v_3) edge [-,thick,blue](v_4) edge [-,thick,blue](v_5);
\node (dots) at (-3,-3) [label=center:{\bf (a) $LC_3$}]{};

[inner sep=0.5mm, place/.style={circle,draw=black,fill=orange,thick}]
\node[place] (v_1) at (5,-1) [label=left:$11$] {};
\node[place] (v_2) at (5,1) [label={[label distance=1mm]left:$12$}] {}edge [-,thick,blue](v_1);
\node[place] (v_3) at (7,1) [label=right:$13$] {}edge [-,thick,blue] (v_2);
\node[place] (v_4) at (7,-1) [label=right:$14$] {}edge [-,thick,blue] (v_3)edge [-,thick,blue] (v_1);
\node[place] (v_5) at (4,-2) [label={[label distance=1mm]left:$21$}] {}edge [-,thick,blue] (v_1);
\node[place] (v_6) at (4,2) [label=left:$22$] {}edge [-,thick,blue] (v_2) edge [-,thick,blue] (v_5);
\node[place] (v_7) at (8,2) [label=right:$23$] {}edge [-,thick,blue] (v_3) edge [-,thick,blue] (v_6);
\node[place] (v_8) at (8,-2) [label=right:$24$] {}edge [-,thick,blue] (v_4) edge [-,thick,blue] (v_5) edge [-,thick,blue] (v_7);
\node (dots) at (6,-3) [label=center:{\bf (b) $LC_4$}]{};
\node (dots) at (6,-4) [label=center:{}]{};
\end{tikzpicture}
\end{center}



\begin{center}
\begin{tikzpicture}
[inner sep=0.5mm, place/.style={circle,draw=black,fill=orange,thick},scale=0.7]
\node[place] (v_1) at (-10,-2) [label=below:$11$] {};
\node[place] (v_2) at (-11,0) [label=right:$12$] {}edge [-,thick,blue](v_1);
\node[place] (v_3) at (-9,2) [label=below:$13$] {}edge [-,thick,blue](v_2);
\node[place] (v_4) at (-7,0) [label=left:$14$] {}edge [-,thick,blue](v_3);
\node[place] (v_5) at (-8,-2) [label=below:$15$] {}edge [-,thick,blue](v_1) edge [-,thick,blue](v_4);
\node[place] (v_6) at (-11,-3) [label=left:$21$] {}edge [-,thick,blue](v_1);
\node[place] (v_7) at (-12,0) [label=left:$22$] {}edge [-,thick,blue](v_2) edge [-,thick,blue](v_6);
\node[place] (v_8) at (-9,3) [label=above:$23$] {}edge [-,thick,blue](v_3) edge [-,thick,blue](v_7);
\node[place] (v_9) at (-6,0) [label=right:$24$] {}edge [-,thick,blue](v_4) edge [-,thick,blue](v_8);
\node[place] (v_10) at (-7,-3) [label=right:$25$] {}edge [-,thick,blue](v_6) edge [-,thick,blue](v_5) edge [-,thick,blue](v_9);
\node (dots) at (-5,-5) [label=center:{\bf (c) $LC_5$}]{};
\node[]at (-4.8,0) [label=below:$\cong$] {};
[inner sep=0.5mm, place/.style={circle,draw=black,fill=orange,thick}]
\node[place] (v_1) at (0,-5) [label=below:$21(1)$] {};
\node[place] (v_2) at (-2,-5) [label=below:$11(1)$] {};
\node[place] (v_3) at (0,-3) [label=right:$22(-1)$] {};
\node[place] (v_4) at (-2,-3) [label=left:$12(-1)$] {};
\node[place] (v_5) at (0,-1) [label=right:$23(2)$] {};
\node[place] (v_6) at (-2,-1) [label=left:$13(2)$] {};
\node[place] (v_7) at (0,1) [label=right:$24(-1)$] {};
\node[place] (v_8) at (-2,1) [label=left:$14(-1)$] {};
\node[place] (v_9) at (0,3) [label=above:$25(1)$] {};
\node[place] (v_{10}) at (-2,3) [label=above:$15(1)$] {};
\tikzset{EdgeStyle/.append style = {blue, bend left=0}}
\Edge(v_1)(v_2), \Edge(v_1)(v_3), \Edge(v_2)(v_4), \Edge(v_3)(v_5), \Edge(v_3)(v_4), \Edge(v_4)(v_6), \Edge(v_5)(v_6),\Edge(v_5)(v_7), \Edge(v_7)(v_9), \Edge(v_6)(v_8), \Edge(v_8)(v_{10}),\Edge(v_7)(v_8), \Edge(v_9)(v_{10});
\tikzset{EdgeStyle/.append style = {blue, bend left = -50}}
\Edge(v_1)(v_9);
\tikzset{EdgeStyle/.append style = {blue, bend left = 50}}
\Edge(v_2)(v_{10});
\end{tikzpicture}
\caption{The circular Ladder graphs of orders $3, 4$ and $5$.}
\label{fig:circlader25}
\end{center}
\end{figure}
\begin{example}
For $LC_5$, the signed Roman domination number is equal to 4 (see Figure \ref{fig:circlader25}~(c)).
\end{example}

\begin{theorem}
Let $LC_n$ be the circular Ladder graph of the order $2n$. Then, for all $n\geq 3, ~n\neq 5$, we have
$$ \gamma_{SR} (LC_n)=\begin{cases}
            \left\lfloor\dfrac{n+2}{2}\right\rfloor+1,     & \text{if }n  \equiv 0,2,3 \pmod {4}, \\ &\\
            \left\lfloor\dfrac{n+2}{2}\right\rfloor+2,      & \text{if }n  \equiv 1 \pmod {4}.   
            \end{cases} $$
\end{theorem}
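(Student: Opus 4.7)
The plan is to follow the same two-step strategy used in the proof of Theorem~\ref{thm:1}. First, I would construct an explicit SRDF on $LC_n$ achieving the claimed weight; then I would show optimality by a local analysis. Since $LC_n$ is obtained from $LG_n$ by adding the two wrap-around edges $\{(1,1),(1,n)\}$ and $\{(2,1),(2,n)\}$, a natural first attempt is to recycle the period-$4$ column labelling of Theorem~\ref{thm:1} and check where the wrap-around does, or does not, respect the SRDF conditions; this naturally splits into cases by $n \bmod 4$.

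For the upper bound, I would define a column-periodic function $f: V(LC_n) \to \{-1, 1, 2\}$ of period four, essentially mimicking the labelling from the ladder case, with small local corrections at the three or four columns immediately before the closure. The corrections are chosen so that both SRDF conditions survive across the ``seam'' while contributing the stated weight $\lfloor(n+2)/2\rfloor + 1$ when $n\equiv 0,2,3 \pmod 4$. When $n \equiv 1 \pmod 4$, the period-$4$ pattern cannot close without either creating a closed neighbourhood whose sum equals $0$, or leaving some $-1$ with no $+2$ neighbour; the cheapest repair upgrades one $-1$ to $+1$, costing one extra unit and yielding $\lfloor(n+2)/2\rfloor + 2$. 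Counting $|V_{-1}|, |V_1|, |V_2|$ in each residue class and using $w(f) = 2|V_2| + |V_1| - |V_{-1}|$ then produces the required upper bound.

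For the lower bound, the key input is that $LC_n$ is $3$-regular. Summing $\sum_{u \in N[v]} f(u) \geq 1$ over all $2n$ vertices and double counting gives $4 w(f) \geq 2n$, hence $w(f) \geq \lceil n/2 \rceil$. To sharpen this to the exact additive constant, I would proceed as in Theorem~\ref{thm:1}: for any SRDF $g$, inspect the pair $(g((1,i)), g((2,i)))$ at each rung. The SRDF conditions allow only a short list of possible rung-pairs, and a short list of feasible transitions between consecutive rungs; tracking these transitions around the cycle, the total weight decomposes as $n/2$ plus a ``closure cost'' determined by $n \bmod 4$, and a case analysis shows this cost equals $1$ (resp.\ $2$) in the stated residue classes.

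The main obstacle is the lower bound when $n \equiv 1 \pmod 4$. On $LG_n$ the four degree-$2$ corner vertices anchor the greedy sweep, but on $LC_n$ every vertex has degree $3$, so the argument must be global. I expect the cleanest route is to model the sequence of consecutive rung-pairs as a closed walk in a small auxiliary transition digraph whose vertices are the admissible pairs and whose edge weights are the per-column contributions, identify the minimum average weight per step (equal to $\tfrac12$), and show that when $n \equiv 1 \pmod 4$ no closed walk of length $n$ can attain this average; the minimum admissible closure overshoots by exactly one unit, producing the required extra $+1$. Once this parity obstruction is quantified, the remaining cases $n \equiv 0, 2, 3 \pmod 4$ reduce to routine local checks that mirror the greedy analysis of Theorem~\ref{thm:1}.
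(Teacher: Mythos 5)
Your outline for the upper bound is essentially the paper's: a period-four column labelling inherited from Theorem~\ref{thm:1} with the last column adjusted at the seam (one small slip: repairing the seam by changing a $-1$ to a $+1$ raises the weight by $2$, not $1$; the one-unit surcharge in the $n\equiv 1\pmod 4$ case comes from an extra vertex labelled $2$, i.e.\ $|V_2|=\left\lfloor (n+2)/2\right\rfloor$ instead of $\left\lfloor n/2\right\rfloor$). For the lower bound you genuinely depart from the paper, which fixes the four vertices $(1,1),(2,1),(1,n),(2,n)$, enumerates their possible label pairs, and appeals to a ``greedy algorithm''; you instead propose the $3$-regularity averaging bound $4w(g)\geq 2n$ followed by a transfer-matrix analysis of rung-pairs around the cycle. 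Your route is the more systematic one and, if executed, would be more rigorous than the paper's.

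The problem is that the decisive step is only asserted. The averaging bound gives $w(g)\geq\lceil n/2\rceil$, which falls short of the claimed value by $1$ when $n\equiv 3\pmod 4$ and by $2$ when $n$ is even or $n\equiv 1\pmod 4$; the entire content of the theorem lies in closing these gaps, and you defer exactly that to ``a case analysis shows this cost equals $1$ (resp.\ $2$).'' Worse, the mechanism you propose for $n\equiv 1\pmod 4$ --- a parity obstruction showing that no closed walk of length $n$ in the transition digraph attains the minimum average --- cannot be correct as stated, because the theorem excludes $n=5$ precisely because $\gamma_{SR}(LC_5)=4<\left\lfloor 7/2\right\rfloor+2$. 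There \emph{is} an admissible length-$5$ closure beating the bound your obstruction would predict, so whatever forces the extra unit for $n\equiv 1\pmod 4$ with $n\geq 9$ is not a residue-class argument alone; your plan never mentions $n=5$. Until the transition digraph is actually written down, its admissible states and minimum-mean cycles identified, and the length-$n$ closure cost computed in a way that visibly fails at $n=5$ but succeeds for $n\geq 9$, the lower bound is not established.
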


\noindent From \eqref{eq:aa} we get a bound of
\[
\gamma_{SR}(LG_n)\geq \frac{n}{4}.
\]

\begin{proof}
For the circular Ladder graph $LC_n, ~n\geq 3$ consider the function \[f:V(LC_n)\rightarrow \set{-1, 1, 2}\] given by
$$f(v)=\begin{cases}-1,      &   \text{if } v=(1,i) \text{ for } i\geq 1 \text { and } i \text{ is odd}; v=(2,i) \text { for } i \geq 3 \text{ and } i \text{ is odd ($i\neq n$).}\\
                                    1,      &    \text{if } v=(2,1); v=(1,i) \text{ and } i  \equiv 0\pmod {4}; v=(2,i) \text{ and } i  \equiv 2 \pmod {4}; \\ & v=(2,n) \text{ and } n  \equiv 3 \pmod {4}\\
                                    2,      & \text{if }  v=(1,i) \text{ and } i  \equiv 2 \pmod {4} ; v=(2,i) \text{ and } i  \equiv 0 \pmod {4}; \\ & v=(2,n) \text{ and } n  \equiv 0,1 \pmod {4}.
           \end{cases} $$
It is clear that $f$ is a SRDF. By our labeling, it is easy to show that
\begin{equation*}
\begin{aligned}
|V_{-1}|&= n-1,\\
|V_1|&=\begin{cases}      \left\lfloor\dfrac{n+2}{2}\right\rfloor, & \text{if }n \equiv 0,1,2 \pmod {4},  \\ &\\
                                                 \left\lceil\dfrac{n+2}{2}\right\rceil,  &  \text{if }n \equiv 3 \pmod {4}.
                     \end{cases} \\
|V_2|&=\begin{cases}      \left\lfloor\dfrac{n}{2}\right\rfloor,  & \text{if }n \equiv 0,2,3 \pmod {4},  \\ &\\
                                                 \left\lfloor\dfrac{n+2}{2}\right\rfloor,  &  \text{if }n \equiv 1 \pmod {4}.
                     \end{cases} \\
\end{aligned}
\end{equation*}

It is easy to verify the following inequality
\[
\gamma_{SR} {(LC_n)} \leq 2|V_2|+|V_1|-|V_{-1}|=\begin{cases}
            \left\lfloor\dfrac{n+2}{2}\right\rfloor+1,     & \text{if }n  \equiv 0,2,3 \pmod {4}, \\ &\\
            \left\lfloor\dfrac{n+2}{2}\right\rfloor+2,      & \text{if }n  \equiv 1 \pmod {4}.   
            \end{cases}
\]

We now show that our function is the minimum SRDF. Let us assume that\[g:V(LC_n)\rightarrow\set{-1,1,2}\] is another arbitrary 
SRDF on $LC_n$. By the construction of the circular Ladder graph all vertices have the same degree (that is, $LC_n$ is a $3$-regular graph). So, whether we select the vertices from the interior or exterior, it makes no difference. Therefore, we will choose the four exterior vertices, namely $x=(1,1), y=(2,1), w=(1,n)~\text{and}~z=(2,n)$.

Clearly, it is not possible to have the following cases
\begin{itemize}
    \item $g(x)=g(y)=g(w)=-1$,
    \item $~g(x)=g(y)=g(z)=-1$,
    \item $~g(x)=g(w)=g(z)=-1$,
    \item $~g(y)=g(z)=g(w)=-1$, and
    \item $g(x)=g(y)=g(w)=g(z)=-1$.
\end{itemize}
This is because if any of the above occurs, then we will have the following cases (in order)
\begin{itemize}
    \item $\sum_{v\in N_{LC_n}[x]}g(v)<0$,
    \item $\sum_{v\in N_{LC_n}[y]}g(v)<0$,
    \item $\sum_{v\in N_{LC_n}[w]}g(v)<0$,
    \item $\sum_{v\in N_{LC_n}[z]}g(v)<0$, and
    \item $\sum_{v\in N_{LC_n}[u]}g(v)<0$ for $u\in \{x,y,w,z\}$.
    \end{itemize}

So, we must have one of the following cases (some cases will be excluded by the symmetric role the vertices play):
\begin{itemize}
    \item $g(x)=g(y)=-1$ and $g(w)=g(z)=1$ (or, $g(x)=g(y)=1$ and $g(w)=g(z)=-1$),
    \item $g(x)=g(y)=g(z)=g(w)=1$,
    \item $g(x)=g(y)=-1$, $g(w)=1$ and $g(z)=2$ (or, $g(x)=g(y)=-1$, $g(w)=2$ and $g(z)=1$),
    \item $g(x)=g(y)=g(w)=g(z)=2$,
    \item $g(x)=g(y)=2$ and $g(w)=g(z)=1$,
    \item $g(x)=g(y)=g(w)=1$ and $g(z)=2$,
    \item $g(x)=g(y)=g(w)=2$ and $g(z)=1$,
    \item $g(x)=g(y)=2$ and $g(w)=g(z)=-1$; and
    \item $g(x)=g(y)=g(w)=2$ and $g(z)=-1$.
\end{itemize}

It is easy to check that following the greedy algorithm, each of the above cases give us
\begin{equation*}
\begin{aligned}
|V_{-1}|&\leq n-1,\\
|V_1|&\geq\begin{cases}      \left\lfloor\dfrac{n+2}{2}\right\rfloor, & \text{if }n \equiv 0,1,2 \pmod {4};  \\ &\\
                                                 \left\lceil\dfrac{n+2}{2}\right\rceil,  &  \text{if }n \equiv 3 \pmod {4},
                     \end{cases}\\
|V_2|&\geq\begin{cases}      \left\lfloor\dfrac{n}{2}\right\rfloor,  & \text{if }n \equiv 0,2,3 \pmod {4};  \\ &\\
                                                 \left\lfloor\dfrac{n+2}{2}\right\rfloor,  &  \text{if }n \equiv 1 \pmod {4}.
                     \end{cases}
\end{aligned}
\end{equation*}
Consequently, summing up these inequalities we have
\begin{equation*}
\begin{aligned}
w(g)=2|V_2|+|V_1|-|V_{-1}|&\geq\begin{cases}      \left\lfloor\dfrac{n+2}{2}\right\rfloor +1, & \text{if }n \equiv 0,2,3 \pmod {4}, \\ &\\
                                                 \left\lfloor\dfrac{n+2}{2}\right\rfloor +2, &  \text{if }n \equiv 1 \pmod {4}.
                     \end{cases} \\
\end{aligned}
\end{equation*}
This completes the proof. 
\end{proof}

Now, we turn our attention to the complement of the circular Ladder graph $LC_n^C$. 
\begin{definition}
    The complement of the circular Ladder graph (denoted by $LC_n^C$) is the complement of the cartesian product of path graph $P_2$ and cycle graph $C_n$, that is $$LC_n^C=(P_2\Cross C_n)^C.$$
\end{definition}
It is easy to see that for the complement of the circular Ladder graph of order $2n$ (that is $LC_n^C$), we have
  \begin{enumerate}[(i)]
   \item $|V(LC_n^C)| =2n$,
\item $|E(LC_n^C)|=\dfrac{2n(2n-1)}{2}-|E(LC_n)|$,
   \item$LC_n^C$ is a regular graph of  degree $(2n-4)$.
  \end{enumerate}

\begin{example}
The complement of the circular Ladder graph of order 3 and 4 has signed Roman domination number $4$ (see Figure \ref{fig:cp2c3c4}).

\begin{figure}[htp]
\begin{tikzpicture}
[inner sep=0.5mm, place/.style={circle,draw=black,fill=orange,thick},scale=0.7]

 [inner sep=0.5mm, place/.style={circle,draw=black,fill=orange,thick}]
\node[place] (v_1) at (-2,-1) [label=below:$1$] {};
\node[place] (v_2) at (0,-1) [label=below:$2$] {};
\node[place] (v_3) at (-2,1) [label=left:$-1$] {};
\node[place] (v_4) at (0,1) [label=right:$-1$] {};
\node[place] (v_5) at (-2,3) [label=above:$2$] {};
\node[place] (v_6) at (0,3) [label=above:$1$] {};
\tikzset{EdgeStyle/.append style = {blue, bend left=0}}
\Edge[](v_1)(v_4), \Edge[](v_1)(v_6), \Edge[](v_2)(v_3), \Edge[](v_2)(v_5), \Edge[](v_3)(v_6), \Edge[](v_4)(v_5)
[inner sep=0.5mm, place/.style={circle,draw=black,fill=orange,thick}]
\node[place] (v_11) at (2,-1) [label=below:$1$] {};
\node[place] (v_22) at (4,-1) [label=below:$2$] {};
\node[place] (v_33) at (2,1) [label=left:$-1$] {};
\node[place] (v_44) at (4,1) [label=right:$1$] {};
\node[place] (v_55) at (2,3) [label=above:$-1$] {};
\node[place] (v_66) at (4,3) [label=above:$-1$] {};
\node[place] (v_77) at (2,5) [label=above:$2$] {};
\node[place] (v_88) at (4,5) [label=above:$1$] {};
\tikzset{EdgeStyle/.append style = {blue, bend left = 50}}
\Edge[](v_11)(v_55), \Edge[](v_33)(v_77)
\tikzset{EdgeStyle/.append style = {blue, bend right = 50}}
\Edge[](v_22)(v_66), \Edge[](v_44)(v_88)
\tikzset{EdgeStyle/.append style = {blue, bend left=0}}
\Edge[](v_11)(v_44), \Edge[](v_11)(v_66), \Edge[](v_22)(v_33), \Edge[](v_22)(v_55), \Edge[](v_33)(v_66), \Edge[](v_44)(v_55),\Edge[](v_11)(v_88), \Edge[](v_33)(v_88),  \Edge[](v_55)(v_88),  \Edge[](v_22)(v_77), \Edge[](v_44)(v_77), \Edge[](v_66)(v_77)
\end{tikzpicture}
 \caption{SRDF for ${LC^C_3}$ and ${ LC^C_4}$.}
 \label{fig:cp2c3c4}
\end{figure}
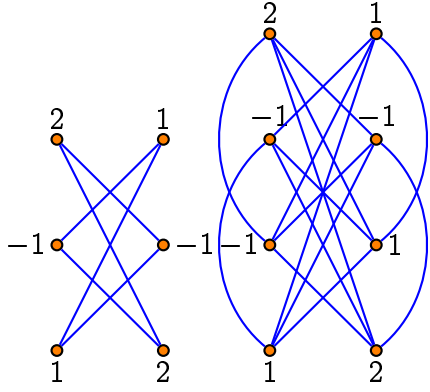
\end{example}

\begin{theorem}
Let $LC_n^C$ be the complement of the circular Ladder graph of order $n$. Then for all $n\geq 5$, we have
$$ \gamma_{SR} (LC_n^C)=     3. $$
\end{theorem}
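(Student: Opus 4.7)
My plan is in two parts: an upper bound by explicit construction, and a lower bound by a case analysis.

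\textbf{Upper bound.} The key observation is that for every $v\in V(LC_n^C)$ we have $N_{LC_n^C}[v]=V\setminus N_{LC_n}(v)$, so condition (i) of the SRDF definition becomes
\[
\sum_{u\in N_{LC_n}(v)} f(u)\le w(f)-1.
\]
Aiming for $w(f)=3$, I need every triple $N_{LC_n}(v)$ to sum to at most $2$. I will construct such an $f$ by placing the label $2$ on three vertices no two of which share a common $LC_n$-neighbour (so the triple of any vertex contains at most one $2$), the label $1$ on $n-3$ suitably chosen vertices, and the label $-1$ on the remaining $n$ vertices. Direct verification of conditions (i) and (ii) then gives $w(f)=2\cdot 3+(n-3)-n=3$ and hence $\gamma_{SR}(LC_n^C)\le 3$.

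\textbf{Lower bound.} Suppose $g$ is an SRDF with $w(g)\le 2$; I will derive a contradiction. The equivalence above shows every $LC_n$-triple has $g$-sum at most $1$; in particular no such triple contains two $2$-labelled vertices (else its sum is at least $2+2-1=3$), so any two elements of $V_2$ cannot share a common $LC_n$-neighbour. I first rule out $|V_2|\le 1$: if $|V_2|=0$, condition (ii) forces $V_{-1}=\emptyset$ and so $w(g)=2n>2$; if $|V_2|=1$, condition (ii) forces the three $LC_n$-neighbours of the unique $2$-vertex to lie in $V_1\cup V_2$ (otherwise some $-1$ has no $2$-neighbour in $LC_n^C$), so the triple at that vertex sums to at least $3>1$. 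For $|V_2|\ge 2$ I use that every triple containing a $2$ must consist of the $2$ together with two $-1$s (since $2+x+y\le 1$ with $x,y\in\{-1,1\}$ forces $x=y=-1$), and then double-count $-1$-appearances across the $2n$ triples to get
\[
3|V_{-1}|\ge 6|V_2|+(2n-3|V_2|)=2n+3|V_2|.
\]
Combining this with $|V_2|+|V_1|+|V_{-1}|=2n$ and $2|V_2|+|V_1|-|V_{-1}|\le 2$ yields the contradiction for all $n\ge 5$.

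The most delicate step will be closing the $|V_2|\ge 2$ case of the lower bound: the counting bound above becomes tight in certain regimes, so I expect a finer analysis that exploits the cyclic structure of $LC_n$ together with the precise form of condition (ii) (rather than just its counting consequences) will be needed to rule out the remaining borderline configurations; the upper bound, by contrast, is routine once a correct placement of the three $2$-vertices has been identified for each residue class of $n$.
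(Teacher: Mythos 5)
Your reformulation of condition (i) as $\sum_{u\in N_{LC_n}(v)}f(u)\le w(f)-1$, via $N_{LC_n^C}[v]=V\setminus N_{LC_n}(v)$, is correct and is a cleaner framework than the paper's, which simply exhibits a labelling ($f(1,2)=f(1,3)=f(2,5)=2$, $f(2,2)=1$, $f(1,i)=1$ for $5\le i\le n$, all remaining vertices $-1$) and then argues minimality only by comparing against single-vertex modifications of that labelling. Your upper bound is only a plan rather than an actual construction, but that half is repairable. The lower bound is not. Your double count $3|V_{-1}|\ge 6|V_2|+(2n-3|V_2|)$ is itself correct under the hypothesis $w(g)\le 2$, but it points the wrong way: since $|V_2|+|V_1|+|V_{-1}|=2n$ gives $w(g)=2n+|V_2|-2|V_{-1}|$, ruling out $w(g)\le 2$ requires an \emph{upper} bound on $|V_{-1}|$ in terms of $|V_2|$, whereas both your counting bound and the hypothesis $w(g)\le 2$ yield \emph{lower} bounds on $|V_{-1}|$. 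Two lower bounds cannot contradict each other, and indeed your three displayed relations are simultaneously satisfiable (they only force roughly $|V_2|\le 2n/3$). So ``combining\ldots yields the contradiction'' does not go through for any $n$, not merely in borderline regimes.

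Worse, the case $|V_2|\ge 2$ cannot be closed by any finer analysis, because the claimed equality fails there. On $LC_6^C$ take
\[
g(1,1)=g(2,4)=2,\qquad g(1,4)=g(1,6)=g(2,1)=g(2,3)=1,
\]
and $g=-1$ on the remaining six vertices $(1,2),(1,3),(1,5),(2,2),(2,5),(2,6)$, so that $w(g)=2$. Every open $LC_6$-neighbourhood then has $g$-sum at most $1=w(g)-1$ (the twelve sums alternate between $1$ and $0$ around each of the two $6$-cycles), so condition (i) holds in the complement; and each $-1$ vertex is a non-neighbour in $LC_6$ of at least one of $(1,1)$ and $(2,4)$, so condition (ii) holds as well. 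Hence $\gamma_{SR}(LC_6^C)\le 2$, and since \eqref{eq:aa} gives $\gamma_{SR}(LC_n^C)\ge 2n/(2n-3)>1$, in fact $\gamma_{SR}(LC_6^C)=2$, contradicting the statement. The paper's own minimality argument does not exclude such configurations either, since it never considers labellings far from its chosen $f$; any correct treatment here has to begin by re-examining the claimed value of $\gamma_{SR}(LC_n^C)$ itself.
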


\begin{proof}
For the complement of circular Ladder graph $LC_n^C$ when $n\geq $5, consider the function \[f:V(LC_n^C)\rightarrow \set{-1, 1, 2}\] given by:
$$f(v)=\begin{cases}-1,      &   \text{if } v=(1,1), v=(1,4), v=(2,i) \text{ for }~ 1\leq i\leq n, i\neq 2,5,\\
                                      1,      &    \text{if } v=(1,i) \text{ for } 5\leq i\leq n, v=(2,2),\\
                                      2,      &  \text{if } v=(1,2), v=(1,3) \text{ and } v=(2,5). 
           \end{cases} $$

Clearly, $f$ is a SRDF. By the construction of our labelling, it easy to show that 
\begin{equation*}
\begin{aligned}
|V_{-1}&|=2+(n-2)=n,\\
|V_1|&=(n-4)+1=n-3,\\
|V_2|&=3.
\end{aligned}
\end{equation*}
Hence, we have 
\begin{equation*}
\begin{split}
 \gamma_{SR} {(LC^C_n)}
&\leq 2|V_2|+|V_1|-|V_{-1}|=3.
\end{split}
\end{equation*}

Now, we prove our function is the minimum SRDF. Let us assume that \[g:V(LC^C_n)\rightarrow\set{-1, 1, 2}\] be another arbitrary SRDF on $LC^C_n,~ n\geq5$. Since the complement of the circular graph $LC^C_n$ is a regular graph of degree $2n-4$, so all vertices in $LC^C_n$ is adjacent to $2n-4$ vertices. To show our function is the minimum then we have the following cases for $g(v)$, depending on $f(v)=2~ \text{ or } f(v)=1$:
\begin{enumerate}
    \item If $g(V(LC^C_n))= f(V(LC^C_n))$ except $g(1,3)\neq f(1,3)$, since $f(1,3)=2$ but if $g(1,3)=1 < f(1,3)$. Clearly it is not possible to have $g(1,3)=1$, Otherwise we will have $\sum_{v\in N_{LC_n^C}[x]}g(v)=0$ where $x=(1,1)$ be the exterior vertex of $LC_n^c$, so
\begin{equation*}
\begin{split}
 \sum_{v\in N_{LC_n^C}[x]}g(v)
&= \sum_{i=1,i\neq 2}^{n-1}g((1,i)) + \sum_{i=2}^n g((2,i))\\
&=0.
\end{split}
\end{equation*}
\item If $g(V(LC_n^C))= f(V(LC_n^C))$ except $g(1,2)\neq f(1,2)$, since $f(1,2)=2$, but $g(1,2)=1 < f(1,2)$. Clearly, it is not possible to have $g(1,2)=1$ since otherwise we will have $\sum_{v\in N_{LC_n^C}[x]}g((1.i))=0$ for $i=2, 3, 4$ or $v=(2,3)$. 
\item $g(V(LC_n^c)) = f(V(LC_n^c)) $ except $g(2,5) \neq f(2,5)$ since $f(2,5)=2$, let $g(2,5)=1$. Clearly it is not possible to have $g(2,5)=1$ otherwise we will have $\sum_{v\in N_{LC_n^C}[x]}g(v)=0$ similar to the first case.
\item If $g(V(LC^C_n))= f(V(LC^C_n))$ except $g(2,2)=-1$, because $f(2,2)=1$. Clearly, it is not possible to have $g(2,2)=-1$, otherwise we will have 
\begin{equation*}
\begin{split}
 \sum_{v\in N_{LC_n^C}[x]}g(v)
&= \sum_{i=1,i\neq 2}^{n-1}g((1,i)) + \sum_{i=2}^n g((2,i))\\
&=-2 .
\end{split}
\end{equation*}
\end{enumerate}


This completes the proof. 
\end{proof}

\section{Concluding Remarks}\label{sec:conc}

\begin{enumerate}
    \item It is a natural next step to find the values of $\gamma_{SR}(G)$ when $G$ is either a grid graph or its complement. We leave that problem open.
    \item Some other closely related types of domination numbers have been studied in the literature. We point out two of these: the concept of signed Roman $k$-Domination in graphs by Henning and Volkmann \cite{HenningVolkmann} ($k=1$ corresponds to a SRDF), and the concept of weak signed Roman Domination in graphs by Volkmann \cite{Volk}. It would be interesting to study the graphs we study in this paper for these domination numbers.
\end{enumerate}

\bibliographystyle{alpha}
\bibliography{main}

\newcommand{\etalchar}[1]{$^{#1}$}
\begin{thebibliography}{AAHL{\etalchar{+}}14}

\bibitem[AAHL{\etalchar{+}}14]{abdollahzadeh2014signed}
H.~Abdollahzadeh~Ahangar, Michael~A. Henning, Christian L{\"o}wenstein, Yancai
  Zhao, and Vladimir Samodivkin.
\newblock Signed {Roman} domination in graphs.
\newblock {\em J. Comb. Optim.}, 27(2):241--255, 2014.

\bibitem[Ber73]{berge1973graphs}
Claude Berge.
\newblock {\em Graphs and hypergraphs. {Translated} by {Edward} {Minieka}},
  volume~6 of {\em North-Holland Math. Libr.}
\newblock Elsevier (North-Holland), Amsterdam, 1973.

\bibitem[BVA16]{BehtoeiVatandoostAzizi}
A.~Behtoei, E.~Vatandoost, and Rajol Abad~F. Azizi.
\newblock Signed {Roman} domination number and join of graphs.
\newblock {\em J. Algebr. Syst.}, 4(1):65--77, 2016.

\bibitem[GJ79]{gray1979computers}
Michael~R. Garey and David~S. Johnson.
\newblock Computers and intractability. {A} guide to the theory of
  {NP}-completeness.
\newblock A {Series} of {Books} in the mathematical {Sciences}. {San}
  {Francisco}: {W}. {H}. {Freeman} and {Company}. {X}, 338 p. (1979)., 1979.

\bibitem[HH03]{henning2003defending}
Michael~A. Henning and Stephen~T. Hedetniemi.
\newblock Defending the {Roman} {Empire}---a new strategy.
\newblock {\em Discrete Math.}, 266(1-3):239--251, 2003.

\bibitem[HHS98]{haynes2017domination}
Teresa~W. Haynes, Stephen~T. Hedetniemi, and Peter~J. Slater, editors.
\newblock {\em Domination in graphs. {Advanced} topics}, volume 209 of {\em
  Pure Appl. Math., Marcel Dekker}.
\newblock New York, NY: Marcel Dekker, 1998.

\bibitem[HV16]{HenningVolkmann}
Michael~A. Henning and Lutz Volkmann.
\newblock Signed {Roman} {{\(k\)}}-domination in graphs.
\newblock {\em Graphs Comb.}, 32(1):175--190, 2016.

\bibitem[HYZZ20]{HongYuZhaZhang}
Xia Hong, Tianhu Yu, Zhengbang Zha, and Huihui Zhang.
\newblock The signed {Roman} domination number of two classes graphs.
\newblock {\em Discrete Math. Algorithms Appl.}, 12(2):18, 2020.
\newblock Id/No 2050024.

\bibitem[Ore62]{ore1962theory}
{\O}ystein Ore.
\newblock {\em Theory of graphs}, volume~38 of {\em Colloq. Publ., Am. Math.
  Soc.}
\newblock American Mathematical Society (AMS), Providence, RI, 1962.

\bibitem[Ste99]{stewart1999defend}
Ian Stewart.
\newblock Defend the roman empire!
\newblock {\em Sci. Amer.}, 281(6):136--138, 1999.

\bibitem[Vol20]{Volk}
Lutz Volkmann.
\newblock Weak signed {Roman} domination in graphs.
\newblock {\em Commun. Comb. Optim.}, 5(2):111--123, 2020.

\end{thebibliography}
\end{document}